\newtheorem{theorem}[subsection]{Theorem}
\newtheorem*{theorem*}{Theorem}
\newtheorem{lemma}[subsection]{Lemma}
\newtheorem{proposition}[subsection]{Proposition}
\newtheorem*{conjecture*}{Conjecture}
\newtheorem*{question*}{Question}
\theoremstyle{remark}
\newtheorem{remark}[subsection]{Remark}
\newtheorem{example}[subsection]{Example}
\theoremstyle{definition}
\newtheorem{definition}[subsection]{Definition}
\newtheorem*{notation*}{Notation}
\newcommand{\opname}[1]{\operatorname{\mathsf{#1}}}
\newcommand{\ca}{{\mathcal A}}
\newcommand{\cc}{{\mathcal C}}
\numberwithin{equation}{section}
\begin{document}

\title[]{Reduction of wide subcategories and recollements}

{\author{Yingying Zhang}

\thanks{MSC2020: 18A40, 18E10}
\thanks{Keywords: abelian category, wide subcategory, recollement.}

\vspace{0.2cm}

\address{Department of Mathematics, Huzhou University, Huzhou 313000, Zhejiang Province, P.R.China}

\email{yyzhang@zjhu.edu.cn}

\begin{abstract}
In this paper, we prove a reduction result on wide subcategories of abelian categories which is similar to Calabi-Yau reduction, silting reduction and $\tau$-tilting reduction. More precisely, if an abelian category $\mathcal{A}$ admits a recollement relative to abelian categories $\mathcal{A}'$ and $\mathcal{A}''$, diagrammatically expressed by
$$\xymatrix@!C=2pc{
\mathcal{A'} \ar@{>->}[rr]|{i_{*}} && \mathcal{A} \ar@<-4.0mm>@{->>}[ll]_{i^{*}} \ar@{->>}[rr]|{j^{*}} \ar@{->>}@<4.0mm>[ll]^{i^{!}}&& \mathcal{A''} \ar@{>->}@<-4.0mm>[ll]_{j_{!}} \ar@{>->}@<4.0mm>[ll]^{j_{*}}
},$$
then the assignment $\cc\mapsto j^*(\cc)$ defines a bijection between wide subcategories in $\mathcal{A}$ containing $i_{*}(\mathcal{A}')$ and wide subcategories in $\mathcal{A}''$. Moreover, a wide subcategory $\mathcal{C}$ of $\mathcal{A}$ containing $i_{*}(\mathcal{A}')$ admits a new recollement relative to $\mathcal{A}'$ and $j^{*}(\mathcal{C})$ which is induced from the original recollement.
\end{abstract}

\maketitle

\section{Introduction}\label{s:introduction}
\medskip

Wide subcategories of an abelian category are analogous to thick subcategories of a triangulated category.
This notion was introduced by Hovey in [Hov] and provides a significant interface between representation theory and combinatorics [IT].  Wide subcategories are closely related to other important notions in representation theory, such as semibricks [As], torsion classes [AIR, DIRRT, MS], silting complexes [AIR], semistable subcategories [Y] and support $\tau$-tilting modules [AIR]. Wide subcategories of $d$-abelian categories was introduced and studied in [HJV].

Recollements of abelian categories were introduced by Beilinson, Bernstein and Deligne in [BBD] for the construction of the category of perverse sheaves on a singular space, arising from recollements of triangulated categories. The notion of recollements is very useful in gluing and in reduction, see for example [AKL, BBD, C, FP, LVY, P].
The main result of this paper is the following reduction result for wide subcategories. This is similar to Calabi-Yau reduction in [IY2], silting reduction in [AI] and [IY1], and reduction of support $\tau$-tilting modules in [EJR] and [J].

\begin{theorem}{\rm(Theorems 3.4 and 3.5)}
\label{thm:main-theorem}
Let ($\mathcal{A}', \mathcal{A}, \mathcal{A}'', i^{*}, i_{*}, i^{!}, j_{!}, j^{*}, j_{*}$) be a recollement (see Definition 2.4 for details). Then we have
\begin{enumerate}
\item There is a bijection between wide subcategories in $\mathcal{A}$ containing $i_{*}(\mathcal{A}')$ and wide subcategories in $\mathcal{A}''$.
\item Let $\mathcal{C}$ be a subcategory of $\mathcal{A}$. If $i_{*}i^{*}(\mathcal{C})\subset\mathcal{C}$(resp. $  i_{*}i^{!}(\mathcal{C})\subset\mathcal{C})$, then $\mathcal{C}\subset\mathcal{A}$ is wide implies that $i^{*}(\mathcal{C})$ (resp. $i^{!}(\mathcal{C})$)$\subset\mathcal{A}'$ is wide.
\end{enumerate}
\end{theorem}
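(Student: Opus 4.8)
The plan is to exploit the standard properties of a recollement of abelian categories, together with the definition of a wide subcategory (closed under kernels, cokernels, extensions). Recall that in the recollement $(\ca',\ca,\ca'',i^*,i_*,i^!,j_!,j^*,j_*)$ the functor $i^*$ is exact? No---only $i_*$, $j^*$ are exact, while $i^*$ is right exact and $i^!$ is left exact. This asymmetry is exactly why the hypotheses $i_*i^*(\cc)\subset\cc$ and $i_*i^!(\cc)\subset\cc$ appear, and it dictates the shape of the argument.

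\medskip

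\noindent\textbf{Part (1): the bijection.}
First I would recall from Section 2 that $j^*j_!\cong\id_{\ca''}\cong j^*j_*$, that $j^*$ is exact, and that for $M\in\ca$ there are exact sequences $i_*i^*i_*i^*M\to\ldots$; more usefully, the counit/unit sequences
\[
j_!j^*M\xrightarrow{\ \ } M\xrightarrow{\ \ } i_*i^*M\to 0,\qquad
0\to i_*i^!M\to M\to j_*j^*M
\]
are exact (the first because $i^*$ kills the image of $j_!$ and $j^*$ is faithful on the complement; these are standard recollement identities). The assignment $\cc\mapsto j^*(\cc)$ on wide subcategories $\cc\supseteq i_*(\ca')$, with inverse $\cd\mapsto \{M\in\ca : j^*M\in\cd\}$, is the candidate bijection. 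I would check: (a) if $\cc$ is wide then $j^*(\cc)$ is wide in $\ca''$ --- this is immediate since $j^*$ is exact and essentially surjective, so kernels, cokernels and extensions are transported; (b) the preimage $\widetilde{\cd}:=\{M: j^*M\in\cd\}$ is wide in $\ca$ and contains $i_*(\ca')$, using exactness of $j^*$ and $j^*i_*=0$; (c) the two assignments are mutually inverse on the relevant classes. The only delicate point in (c) is showing $\widetilde{j^*(\cc)}=\cc$: the inclusion $\supseteq$ is clear, and for $\subseteq$, given $M$ with $j^*M\cong j^*C$ for some $C\in\cc$, I would use the defining sequences above together with $i_*(\ca')\subseteq\cc$ and closure of $\cc$ under kernels/cokernels/extensions to conclude $M\in\cc$. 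This is the technical heart of Part (1).

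\medskip

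\noindent\textbf{Part (2): $i^*(\cc)$ and $i^!(\cc)$ are wide.}
Here I treat the $i^*$ case; the $i^!$ case is dual (replace $i^*$ by $i^!$, ``right exact'' by ``left exact'', and reverse arrows). Since $i^*$ is right exact and $i_*$ is exact and fully faithful (with $i^*i_*\cong\id_{\ca'}$), objects of $i^*(\cc)$ are exactly the $i^*C$ with $C\in\cc$, and an object $X\in\ca'$ lies in $i^*(\cc)$ iff $i_*X\in i_*i^*(\cc)\subseteq\cc$ by hypothesis, i.e. iff $i_*X\in\cc$. Thus $i^*(\cc)=\{X\in\ca' : i_*X\in\cc\}$. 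Now closure properties transfer along the exact, fully faithful $i_*$: given a morphism $f\colon X\to Y$ in $i^*(\cc)$, we have $i_*X,i_*Y\in\cc$, so $\ker(i_*f)=i_*(\ker f)$ and $\cok(i_*f)=i_*(\cok f)$ lie in $\cc$ (exactness of $i_*$), hence $\ker f,\cok f\in i^*(\cc)$; and given an extension $0\to X'\to X\to X''\to 0$ in $\ca'$ with $X',X''\in i^*(\cc)$, applying $i_*$ gives an extension in $\ca$ with outer terms in $\cc$, so $i_*X\in\cc$, so $X\in i^*(\cc)$. Therefore $i^*(\cc)$ is wide.

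\medskip

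\noindent\textbf{Main obstacle.}
I expect the routine parts --- transporting exactness along $j^*$ and along $i_*$ --- to go through mechanically once the recollement identities are in hand. The genuine obstacle is the inverse-map verification $\widetilde{j^*(\cc)}\subseteq\cc$ in Part (1): one must carefully feed the canonical sequences relating $M$, $j_!j^*M$, $j_*j^*M$ and $i_*i^*M$ into the closure axioms of $\cc$, and here the hypothesis $i_*(\ca')\subseteq\cc$ is used in an essential way (it is what lets the ``$i_*$-part'' of $M$ be absorbed). A secondary subtlety is making sure that the description $i^*(\cc)=\{X : i_*X\in\cc\}$ in Part (2) really does require the stated hypothesis $i_*i^*(\cc)\subseteq\cc$ and does not hold in general --- without it, $i^*(\cc)$ need only be closed under cokernels and extensions but possibly not kernels, since $i^*$ is merely right exact.
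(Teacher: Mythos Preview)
Your overall architecture matches the paper's: the bijection in Part~(1) is exactly $\cc\mapsto j^*(\cc)$ with inverse $\cw\mapsto\{M:j^*M\in\cw\}$, and Part~(2) is argued via the exact, fully faithful $i_*$. Your treatment of Part~(2) is in fact slightly slicker than the paper's: your identification $i^*(\cc)=\{X\in\ca':i_*X\in\cc\}$ (valid precisely under the hypothesis $i_*i^*(\cc)\subset\cc$) lets all three closure properties follow at once from exactness of $i_*$, whereas the paper verifies kernels, cokernels and extensions separately.

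There is, however, a genuine gap in Part~(1), step~(a). The claim that $j^*(\cc)$ is wide ``immediately since $j^*$ is exact and essentially surjective'' does not hold: exactness of $j^*$ tells you that $j^*(\ker f)=\ker(j^*f)$, but to show $j^*(\cc)$ is closed under kernels you must start from an \emph{arbitrary} morphism $\phi:M''\to N''$ with $M'',N''\in j^*(\cc)$, and such a $\phi$ need not be of the form $j^*(\psi)$ for $\psi$ a morphism of $\cc$. The paper handles this by first proving (Lemma~3.1) that $j_*j^*(\cc)\subset\cc$ and $j_!j^*(\cc)\subset\cc$ whenever $i_*(\ca')\subset\cc$; this is deduced from the full four-term sequences
\[
0\to i_*(M')\to j_!j^*(A)\to A\to i_*i^*(A)\to 0,\qquad 0\to i_*i^!(A)\to A\to j_*j^*(A)\to i_*(N')\to 0,
\]
not the truncated three-term versions you wrote. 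Once Lemma~3.1 is available one can lift $\phi$ to $j_*\phi:j_*M''\to j_*N''$ inside $\cc$ and then push back down with $j^*$. The extension case is more delicate still: neither $j_*$ nor $j_!$ is exact, so applying either to a short exact sequence in $\ca''$ does not directly give a short exact sequence in $\cc$; the paper's Proposition~3.2 handles this by analysing $\cok(j_*f'')$ and again invoking the four-term sequence. You correctly flag step~(c) as the delicate point, and your sketch there is right (it also relies on Lemma~3.1 through $j_!j^*M\in\cc$), but you have underestimated step~(a): it needs essentially the same ingredients.
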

Note that $i_{*}i^{*}(\mathcal{C})\subset\mathcal{C}$ (resp. $ i_{*}i^{!}(\mathcal{C})\subset\mathcal{C})$ is weaker than $i_{*}(\mathcal{A}')\subset\mathcal{C}$.
It is a general interesting problem to construct a new recollement from a known one, see for example [CL, LL, LW] for some study on this problem. As an application of Theorem~\ref{thm:main-theorem}, we can construct a new recollement from the given one as follows:
\begin{theorem}{\rm(Theorem 3.8)}
\label{thm:main-theorem-2}
Let ($\mathcal{A}', \mathcal{A}, \mathcal{A}'', i^{*}, i_{*}, i^{!}, j_{!}, j^{*}, j_{*}$) be a recollement of abelian categories and $\mathcal{C}$ be a wide subcategory of $\mathcal{A}$. If $i_{*}(\mathcal{A}')\subset\mathcal{C}$, then the given recollement restricts to a new recollement of wide subcategories ($\mathcal{A}'$, $\mathcal{C}, j^{*}(\mathcal{C}), \overline{i^{*}}, \overline{i_{*}}, \overline{i^{!}}, \overline{j_{!}}, \overline{j^{*}}, \overline{j_{*}}$).
\end{theorem}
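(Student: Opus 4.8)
The plan is to construct the six functors $\overline{i^{*}}, \overline{i_{*}}, \overline{i^{!}}, \overline{j_{!}}, \overline{j^{*}}, \overline{j_{*}}$ by restricting the original recollement functors, and then to verify the axioms of Definition 2.4 one by one, using Theorem 3.4 and Theorem 3.5 to guarantee that all targets are again wide subcategories. First I would observe that since $i_{*}(\mathcal{A}')\subset\mathcal{C}$, a fortiori $i_{*}i^{*}(\mathcal{C})\subset\mathcal{C}$ and $i_{*}i^{!}(\mathcal{C})\subset\mathcal{C}$, so Theorem 3.5(2) applies and both $i^{*}(\mathcal{C})$ and $i^{!}(\mathcal{C})$ are wide in $\mathcal{A}'$; moreover, by the recollement identities $i^{*}i_{*}\cong \mathrm{id}\cong i^{!}i_{*}$, the hypothesis $i_{*}(\mathcal{A}')\subset\mathcal{C}$ forces $i^{*}(\mathcal{C})=\mathcal{A}'=i^{!}(\mathcal{C})$, so the left-hand term of the new recollement is genuinely $\mathcal{A}'$ and $\overline{i^{*}}, \overline{i^{!}}$ are the corestrictions of $i^{*}, i^{!}$ to $\mathcal{C}$, while $\overline{i_{*}}$ is $i_{*}$ viewed as landing in $\mathcal{C}$. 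For the quotient side, $j^{*}(\mathcal{C})$ is wide in $\mathcal{A}''$ by Theorem 3.4(1) (it is the wide subcategory matched to $\mathcal{C}$ under the bijection), and I would set $\overline{j^{*}}$ to be $j^{*}$ corestricted to $j^{*}(\mathcal{C})$. The subtle point is to show $j_{!}$ and $j_{*}$ restrict: for $X\in j^{*}(\mathcal{C})$ one must check $j_{!}(X)$ and $j_{*}(X)$ lie in $\mathcal{C}$. I would argue this using the exact sequences attached to the recollement, namely $j_{!}j^{*}(M)\to M\to i_{*}i^{*}(M)\to 0$ and $0\to i_{*}i^{!}(M)\to M\to j_{*}j^{*}(M)$: writing $X=j^{*}(M)$ with $M\in\mathcal{C}$, the outer terms $i_{*}i^{*}(M)$, $i_{*}i^{!}(M)$ lie in $\mathcal{C}$ by hypothesis, and $M\in\mathcal{C}$, so closure of the wide subcategory $\mathcal{C}$ under kernels, cokernels and extensions pins down $j_{!}j^{*}(M)=j_{!}(X)$ and $j_{*}j^{*}(M)=j_{*}(X)$ as objects of $\mathcal{C}$ (using also $j^{*}j_{!}\cong\mathrm{id}\cong j^{*}j_{*}$ to see these really are $j_{!}(X)$, $j_{*}(X)$).

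Once the six functors are defined, the adjunctions $(\overline{i^{*}},\overline{i_{*}})$, $(\overline{i_{*}},\overline{i^{!}})$, $(\overline{j_{!}},\overline{j^{*}})$, $(\overline{j^{*}},\overline{j_{*}})$ follow formally: an adjunction between functors restricts to any pair of full subcategories that are sent into each other, because the adjunction isomorphism $\Hom(F A, B)\cong\Hom(A, G B)$ is inherited from the ambient categories (fullness of the inclusions $\mathcal{A}'\hookrightarrow\mathcal{A}'$, $\mathcal{C}\hookrightarrow\mathcal{A}$, $j^{*}(\mathcal{C})\hookrightarrow\mathcal{A}''$ is what makes the Hom-sets agree). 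Next I would check that $\overline{i_{*}}$, $\overline{j_{!}}$, $\overline{j_{*}}$ are fully faithful: this is immediate since $i_{*}$, $j_{!}$, $j_{*}$ already are, and restricting a fully faithful functor to a full subcategory of the source keeps it fully faithful. The identity $\overline{j^{*}}\,\overline{i_{*}}=0$ is inherited from $j^{*}i_{*}=0$. It remains to verify exactness of the sequence in the middle, i.e. that $\Im(\overline{i_{*}})=\Ker(\overline{j^{*}})$ inside $\mathcal{C}$: the inclusion $\subseteq$ is clear from $j^{*}i_{*}=0$; for $\supseteq$, if $M\in\mathcal{C}$ and $\overline{j^{*}}(M)=j^{*}(M)=0$, then in $\mathcal{A}$ the object $M$ is in the image of $i_{*}$, say $M\cong i_{*}(M')$, and then $M'\cong i^{*}(M)\in i^{*}(\mathcal{C})=\mathcal{A}'$, so $M\cong\overline{i_{*}}(M')$ is in the image of $\overline{i_{*}}$.

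The main obstacle I anticipate is precisely the closure argument for $\overline{j_{!}}$ and $\overline{j_{*}}$ — showing the adjoints of the quotient functor genuinely land in $\mathcal{C}$, rather than merely in $\mathcal{A}$. The hypothesis $i_{*}(\mathcal{A}')\subset\mathcal{C}$ (stronger than the $i_{*}i^{*}(\mathcal{C})\subset\mathcal{C}$, $i_{*}i^{!}(\mathcal{C})\subset\mathcal{C}$ of Theorem 3.5) is exactly what is needed here, because it makes both outer terms of the canonical triangles available inside $\mathcal{C}$ simultaneously; with only one of the two weaker hypotheses one would control only one of $j_{!}$, $j_{*}$. A secondary, more bookkeeping-heavy point is to confirm that the unit/counit morphisms of the restricted adjunctions are the restrictions of the original ones (so that, e.g., $\overline{j^{*}}\,\overline{j_{!}}\cong\mathrm{id}$ and $\overline{j^{*}}\,\overline{j_{*}}\cong\mathrm{id}$ hold with the correct isomorphisms), but this is automatic from the uniqueness of adjoints once the restrictions are known to exist. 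Everything else is a routine transcription of the recollement axioms from $\mathcal{A}$ to its wide subcategory $\mathcal{C}$.
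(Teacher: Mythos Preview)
Your proposal follows essentially the same route as the paper (which packages the steps as Lemma 3.1, Proposition 3.6, Lemma 3.7, and then Theorem 3.8), and the verification of the adjunctions, full faithfulness, and $\Im\overline{i_{*}}=\Ker\overline{j^{*}}$ is fine.

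There is one genuine slip at exactly the place you flagged as the ``subtle point'': the exact sequences you write down, $j_{!}j^{*}(M)\to M\to i_{*}i^{*}(M)\to 0$ and $0\to i_{*}i^{!}(M)\to M\to j_{*}j^{*}(M)$, are truncations of the four-term sequences in Remark 2.5(3). From the first truncated sequence you can only conclude that the \emph{image} of $j_{!}j^{*}(M)\to M$ (namely $\ker(M\to i_{*}i^{*}(M))$) lies in $\mathcal{C}$; since that map is in general not a monomorphism, this does not force $j_{!}j^{*}(M)\in\mathcal{C}$. The fix is to use the full sequence
\[
0\to i_{*}(M')\to j_{!}j^{*}(M)\to M\to i_{*}i^{*}(M)\to 0,
\]
note $i_{*}(M')\in\mathcal{C}$ by the hypothesis $i_{*}(\mathcal{A}')\subset\mathcal{C}$, and then combine kernel-closure with extension-closure. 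The dual remark applies to $j_{*}j^{*}(M)$, where the missing right-hand term $i_{*}(N')$ is needed. This is precisely the content of the paper's Lemma 3.1; with that correction your argument goes through.
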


\vspace{0.2cm}

\section{Preliminaries}
\medskip

In this section, we collect some basic material that will be used later. We begin this section by introducing the definition of wide subcategories.
\begin{definition}{[\rm Hov, Definition 1.1]}
A full subcategory $\mathcal{C}$ of an abelian category $\mathcal{A}$ is called $wide$ if it is abelian and closed under extensions.
\end{definition}
Note that when we say a full subcategory $\mathcal{C}$ is abelian, we mean that $\mathcal{C}$ is closed under kernels and cokernels, more precisely, if $f: M\rightarrow N$ is a morphism of $\mathcal{C}$, then both the kernel and cokernel of $f$ are also in $\mathcal{C}$. We denote by $\opname{wide}\mathcal{A}$ the set of wide subcategories of $\mathcal{A}$ and $\opname{wide}_{\mathcal{C}}\mathcal{A}$ the set of wide subcategories of $\mathcal{A}$ containing a given subcategory $\mathcal{C}$.
\begin{example}{\rm[Hov]}
Let $R$ be a ring.
\begin{enumerate}
\item The empty subcategory, the zero subcategory and the entire category of $R$-modules are all wide subcategories.
\item The category of all $R$-modules of cardinality $\leq\kappa$ for some infinite cardinal $\kappa$ is a wide subcategory.
\item The category of finite-dimensional rational vector spaces, as a subcategory of the category of abelian groups, is a wide subcategory.
\end{enumerate}
\end{example}
For the convenience of the reader, we recall the definition of adjoint pairs [ARS] before the definition of recollements.
\begin{definition}
Let $\mathcal{A}$ and $\mathcal{B}$ be categories. An $adjunction$ from $\mathcal{A}$ to $\mathcal{B}$ is a triple $\langle F, G, \phi\rangle: \mathcal{A}\rightarrow\mathcal{B}$, where $F$ is a functor from $\mathcal{A}$ to $\mathcal{B}$, $G$ is a functor from $\mathcal{B}$ to $\mathcal{A}$, and $\phi$ is a functor which assigns to each pair of objects $A\in\mathcal{A}, B\in\mathcal{B}$, a bijection $\phi=\phi_{A, B}: {\rm Hom}_{\mathcal{B}}(FA, B)\cong{\rm Hom}_{\mathcal{A}}(A, GB)$ which is functorial in $A$ and $B$. The functor $F$ is called a $left$ $adjoint$ of $G$, $G$ is called a $right$ $adjoint$ of $F$, and the pair $(F, G)$ is called an $adjoint$ $pair$ from $\mathcal{A}$ to $\mathcal{B}$.
\end{definition}
Now we recall the definition of recollements of abelian categories, see for instance [BBD, FP, K].

\begin{definition} Let $\mathcal{A}, \mathcal{A'}, \mathcal{A''}$ be abelian categories. Then a recollement of $\mathcal{A}$ relative to $\mathcal{A'}$ and $\mathcal{A''}$ is given by the following diagram
$$\xymatrix@!C=2pc{
\mathcal{A'} \ar@{>->}[rr]|{i_{*}} && \mathcal{A} \ar@<-4.0mm>@{->>}[ll]_{i^{*}} \ar@{->>}[rr]|{j^{*}} \ar@{->>}@<4.0mm>[ll]^{i^{!}}&& \mathcal{A''} \ar@{>->}@<-4.0mm>[ll]_{j_{!}} \ar@{>->}@<4.0mm>[ll]^{j_{*}}
}$$
such that
\begin{enumerate}
\item ($i^{*}, i_{*}$), ($i_{*}, i^{!}$), ($j_{!}, j^{*}$) and ($j^{*}, j_{*}$) are adjoint pairs;
\item $i_{*}, j_{!}$ and $j_{*}$ are fully faithful functors;
\item ${\rm Im}i_{*}={\rm Ker}j^{*}$.
\end{enumerate}
Throughout this paper, we denote by ($\mathcal{A}', \mathcal{A}, \mathcal{A}'', i^{*}, i_{*}, i^{!}, j_{!}, j^{*}, j_{*}$) a recollement of $\mathcal{A}$ relative to $\mathcal{A'}$ and $\mathcal{A''}$ as above.
\end{definition}
\begin{remark}
(1) From Definition 2.4(1), it follows that $i_{*}$ and $j^{*}$ are both right adjoint functors and left adjoint functors, therefore they are exact functors of abelian categories.

(2) By Definition 2.4(2), one can assume that $i^{*}i_{*}\cong id, i^{!}i_{*}\cong id, j^{*}j_{!}\cong id$ and $j^{*}j_{*}\cong id$ in the definition of recollements.

(3) By [PV, Proposition 2.8], for any $A\in\mathcal{A}$, there exist $M'$ and $N'$ in $\mathcal{A}'$ such that the units and counits of the adjunctions induce the following exact sequences
$$ 0\rightarrow i_{*}(M')\rightarrow j_{!}j^{*}(A)\rightarrow A\rightarrow i_{*}i^{*}(A)\rightarrow 0,$$
$$0\rightarrow i_{*}i^{!}(A)\rightarrow A\rightarrow j_{*}j^{*}(A)\rightarrow i_{*}(N')\rightarrow 0.
$$
\end{remark}

\vspace{0.2cm}

\section{Recollements of wide subcategories}
\medskip
In this section we prove Theorems~\ref{thm:main-theorem} and~\ref{thm:main-theorem-2}.

Throughout this section, let ($\mathcal{A}', \mathcal{A}, \mathcal{A}'', i^{*}, i_{*}, i^{!}, j_{!}, j^{*}, j_{*}$) be a recollement of abelian categories.
We first establish the following important result.

\begin{lemma}
Let $\mathcal{C}\in \opname{wide}_{i_{*}(\mathcal{A}')}\mathcal{A}$. Then we have
\begin{enumerate}
\item $i_{*}i^{*}(\mathcal{C})\subset \mathcal{C}$, $i_{*}i^{!}(\mathcal{C})\subset \mathcal{C}$, $j_{*}j^{*}(\mathcal{C})\subset \mathcal{C}$ and $j_{!}j^{*}(\mathcal{C})\subset \mathcal{C}$.
\item If $j^{*}(C)\in j^{*}(\mathcal{C})$ for some $C\in \mathcal{A}$, then $C\in \mathcal{C}$.
\end{enumerate}
\end{lemma}

\begin{proof}
(1) Since $i_{*}(\mathcal{A}')\subset \mathcal{C}$, the inclusions $i_{*}i^{*}(\mathcal{C})\subset \mathcal{C}$ and $i_{*}i^{!}(\mathcal{C})\subset \mathcal{C}$ are obvious.
For any $C\in \mathcal{C}\subset \mathcal{A}$, by Remark 2.5(3) there exist $M',N'\in\mathcal{A}$ and two exact sequences
$$ 0\rightarrow i_{*}(M')\rightarrow j_{!}j^{*}(C)\rightarrow C\rightarrow i_{*}i^{*}(C)\rightarrow 0,$$
$$0\rightarrow i_{*}i^{!}(C)\rightarrow C\rightarrow j_{*}j^{*}(C)\rightarrow i_{*}(N')\rightarrow 0.
$$ Since $\mathcal{C}$ is wide and $i_{*}(\mathcal{A}')\subset\mathcal{C}$, it follows that $j_{*}j^{*}(C)\in\mathcal{C}$ and $j_{!}j^{*}(C)\in\mathcal{C}$. Therefore $j_{*}j^{*}(\mathcal{C})\subset \mathcal{C}$ and $j_{!}j^{*}(\mathcal{C})\subset \mathcal{C}$.

(2) By Remark 2.5(3) there exists an exact sequence
$$0\rightarrow i_{*}(C')\stackrel{f}{\rightarrow} j_{!}j^{*}(C)\rightarrow C\rightarrow i_{*}i^{*}(C)\rightarrow 0,$$
where $C'\in \mathcal{A}'$. Since $j^*(C)\in j^*(\cc)$, by (1) we have $j_!j^*(C)\in j_!j^*(\cc)\subset \cc$. Since $i_*(\ca')\subset\cc$, both $i_*(C')$ and $i_*i^*(C)$ are in $\cc$. Therefore $f$ is a morphism in $\cc$, and thus its cokernel $\mathrm{Coker}f$ belongs to $\cc$ because $\cc$ is closed under cokernels. Now $C$, being an extension of $i_*i^*(C)$ by $\mathrm{Coker}f$, belongs to $\cc$, because $\cc$ is closed under extensions.
\end{proof}

Now we are in the position to prove Theorem 1.1.

\begin{proposition}
If $\mathcal{C}\in\opname{wide}_{i_{*}(\mathcal{A}')}\mathcal{A}$, then $j^{*}(\mathcal{C})\in\opname{wide}\mathcal{A}''$.
\end{proposition}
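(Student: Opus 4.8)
The plan is to show that $j^{*}(\mathcal{C})$ is closed under kernels, cokernels, and extensions inside $\mathcal{A}''$, exploiting that $j^{*}$ is exact (Remark 2.5(1)) and that $j^{*}j_{!}\cong\mathrm{id}$ and $j^{*}j_{*}\cong\mathrm{id}$ (Remark 2.5(2)), together with Lemma 3.2. First I would set up notation: a typical object of $j^{*}(\mathcal{C})$ is $j^{*}(C)$ for some $C\in\mathcal{C}$. For kernels and cokernels, take a morphism $g\colon j^{*}(C_{1})\to j^{*}(C_{2})$ in $\mathcal{A}''$ with $C_{1},C_{2}\in\mathcal{C}$. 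The key trick is to lift $g$ to a morphism in $\mathcal{C}$: apply $j_{*}$ (or $j_{!}$) to get $j_{*}(g)\colon j_{*}j^{*}(C_{1})\to j_{*}j^{*}(C_{2})$, which by Lemma 3.2 is a morphism between objects of $\mathcal{C}$; since $\mathcal{C}$ is wide, its kernel $K$ and cokernel $L$ lie in $\mathcal{C}$. Then apply the exact functor $j^{*}$ and use $j^{*}j_{*}\cong\mathrm{id}$ to conclude that $\ker(g)\cong j^{*}(K)$ and $\cok(g)\cong j^{*}(L)$ are in $j^{*}(\mathcal{C})$.

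Next, for closure under extensions, suppose $0\to j^{*}(C_{1})\to X\to j^{*}(C_{2})\to 0$ is a short exact sequence in $\mathcal{A}''$ with $C_{1},C_{2}\in\mathcal{C}$; I must produce $C\in\mathcal{C}$ with $j^{*}(C)\cong X$. The natural move is to apply $j_{*}$, which is left exact, yielding an exact sequence $0\to j_{*}j^{*}(C_{1})\to j_{*}(X)\to j_{*}j^{*}(C_{2})$, and then to take the pullback/pushout or simply argue directly: one can instead apply $j_{!}$ (right exact) to get $j_{!}j^{*}(C_{1})\to j_{!}(X)\to j_{!}j^{*}(C_{2})\to 0$. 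Combining the two and using that $j_{!}j^{*}(C_{i})$ and $j_{*}j^{*}(C_{i})$ both lie in $\mathcal{C}$ by Lemma 3.2, together with the exactness of $j^{*}$ and $j^{*}j_{!}\cong j^{*}j_{*}\cong\mathrm{id}$, should let me realize $X$ as $j^{*}$ of an extension built inside $\mathcal{C}$; since $\mathcal{C}$ is closed under extensions, that object lies in $\mathcal{C}$.

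The main obstacle will be the extension-closure step: unlike kernels and cokernels, an extension in $\mathcal{A}''$ need not be the image under $j^{*}$ of an extension in $\mathcal{C}$ in any obvious canonical way, because $j_{*}$ is only left exact and $j_{!}$ only right exact, so neither alone transports the short exact sequence faithfully. The resolution I anticipate is to use both adjoints simultaneously: form $j_{*}(X)$, note the defect of right-exactness is measured by something in $\ker j^{*}=\im i_{*}\subset\mathcal{C}$, and then take an appropriate subobject or quotient of $j_{*}(X)$ (or a pullback along a unit map) that lies in $\mathcal{C}$ by the wideness of $\mathcal{C}$ and Lemma 3.2, finally checking via $j^{*}j_{*}\cong\mathrm{id}$ and exactness of $j^{*}$ that applying $j^{*}$ returns $X$. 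I would also double-check the degenerate cases (the zero object and the empty subcategory) so that $j^{*}(\mathcal{C})$ is genuinely a wide, hence in particular nonempty/abelian, subcategory of $\mathcal{A}''$.
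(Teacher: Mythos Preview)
Your plan is correct and is essentially the paper's proof: for kernels and cokernels you apply $j_{*}$, use Lemma~3.1 (your ``Lemma~3.2'') to see $j_{*}j^{*}(\mathcal{C})\subset\mathcal{C}$, take the (co)kernel inside $\mathcal{C}$, and push back with the exact functor $j^{*}$ and $j^{*}j_{*}\cong\mathrm{id}$. For extensions the paper executes the second of your suggested routes---apply only $j_{*}$ to $0\to M''\xrightarrow{f''}N''\to P''\to 0$, set $A:=\cok j_{*}(f'')$, check $j^{*}(A)\cong P''$, and then invoke Remark~2.5(3) on $A$ (exactly your ``defect lies in $\operatorname{Im}i_{*}\subset\mathcal{C}$'' observation) together with wideness of $\mathcal{C}$ to get $A\in\mathcal{C}$, whence $j_{*}(N'')\in\mathcal{C}$ by extension-closure and $N''\cong j^{*}j_{*}(N'')\in j^{*}(\mathcal{C})$; no use of $j_{!}$ is needed.
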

\begin{proof}
Assume that $\mathcal{C}$ is wide and $i_{*}(\mathcal{A}')\subset \mathcal{C}$.

Step 1: $j^{*}(\mathcal{C})$ is closed under kernels and cokernels. If $f'': M''\rightarrow N''$ is a morphism of $j^{*}(\mathcal{C})$, we have to prove that ${\rm Ker}f'', {\rm Coker}f''\in j^{*}(\mathcal{C})$. Applying the functor $j_{*}$ to $f''$, we have the following exact sequence:
\begin{equation}
0\longrightarrow {\rm Ker}j_{*}(f'')\longrightarrow j_{*}(M'')\buildrel {j_{*}(f'')} \over\longrightarrow j_{*}(N'')\longrightarrow{\rm Coker}j_{*}(f'')\longrightarrow 0.
\end{equation}
By Lemma 3.1(1), $j_{*}j^{*}(\mathcal{C})\subset\mathcal{C}$, it follows that $j_{*}(f'')$ is a morphism of $\mathcal{C}$. Since $\mathcal{C}$ is wide, ${\rm Ker}j_{*}(f''), {\rm Coker}j_{*}(f'')\in\mathcal{C}$.
Then applying $j^{*}$ to (3.1), by Remark 2.5(1) and (2) we have the following commutative diagram:
$$\xymatrix{
0\ar[r] & j^{*}({\rm Ker}j_{*}(f''))\ar[r]\ar@{.>}[d] & j^{*}j_{*}(M'')\ar[r]^{j^{*}j_{*}(f'')}\ar[d]^{\cong} & j^{*}j_{*}(N'')\ar[d]^{\cong}\ar[r]\ar[d]^{\cong} & j^{*}({\rm Coker}j_{*}(f''))\ar[r]\ar@{.>}[d] & 0\\
0\ar[r] & {\rm Ker}f''\ar[r] & M''\ar[r]^{f''} & N''\ar[r] & {\rm Coker}f''\ar[r] & 0
.}$$
Thus ${\rm Ker}f''\cong j^{*}({\rm Ker}j_{*}(f''))\in j^{*}(\mathcal{C})$ and ${\rm Coker}f''\cong j^{*}({\rm Coker}j_{*}(f''))\in j^{*}(\mathcal{C})$.

Step 2: $j^{*}(\mathcal{C})$ is closed under extensions. Let $0\rightarrow M'' \buildrel {f''} \over\rightarrow N''\rightarrow P''\rightarrow 0$ be an exact sequence in $\mathcal{A''}$ where $M'', P''\in j^{*}(\mathcal{C})$, we have to prove that $N''\in j^{*}(\mathcal{C})$. Since $j_{*}$ is left exact, applying it to $f''$, we have the following exact sequence:
\begin{equation}
0\longrightarrow  j_{*}(M'')\buildrel {j_{*}(f'')} \over\longrightarrow j_{*}(N'')\longrightarrow {\rm Coker}j_{*}(f'')\longrightarrow 0.
\end{equation}
By Lemma 3.1(1), $j_{*}j^{*}(\mathcal{C})\subset\mathcal{C}$. It follows that $j_{*}(M'')\in\mathcal{C}$. Then applying $j^{*}$ to (3.2), by Remark 2.5(1) and (2) we have the following commutative diagram:
$$\xymatrix{
0\ar[r] & j^{*}j_{*}(M'')\ar[r]^{j^{*}j_{*}(f'')}\ar[d]^{\cong} & j^{*}j_{*}(N'')\ar[r]\ar[d]^{\cong} & j^{*}({\rm Coker}j_{*}(f''))\ar[r]\ar@{.>}[d] & 0\\
0\ar[r] & M''\ar[r]^{f''} & N''\ar[r] & P''\ar[r] & 0
.}$$
By uniqueness of the cokernel, we know that $P''\cong j^{*}({\rm Coker}j_{*}(f''))\in j^{*}(\mathcal{C})$. By Lemma 3.1(2), ${\rm Coker}j_{*}(f'')\in \mathcal{C}$. Since $\mathcal{C}$ is wide, we know that $j_{*}(N'')\in \mathcal{C}$, which is the middle term of the exact sequence (3.2). So $N''\cong j^{*}j_{*}(N'')\in j^{*}(\mathcal{C})$.

We have finished to prove that $j^{*}(\mathcal{C})$ is wide.
\end{proof}
Conversely, starting from a wide subcategory of $\mathcal{A''}$ we can also find a corresponding wide subcategory of $\mathcal{A}$.
\begin{proposition}
If $\mathcal{W}\in\opname{wide}\mathcal{A''}$, then $\mathcal{C}=\{M\in\mathcal{A}|j^{*}(M)\in\mathcal{W}\}\in\opname{wide}_{i_{*}(\mathcal{A}')}\mathcal{A}$.
\end{proposition}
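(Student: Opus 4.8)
The plan is to exploit the fact, recorded in Remark 2.5(1), that $j^{*}$ is an exact functor: then $\mathcal{C}$ is simply the preimage under $j^{*}$ of the wide subcategory $\mathcal{W}$, and the preimage of a wide subcategory under an exact functor between abelian categories is again wide. This makes the argument noticeably shorter than the proof of Proposition 3.4, since no Five Lemma diagram chase is needed; everything is obtained by applying $j^{*}$ and reading off the conclusion from the properties of $\mathcal{W}$.

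First I would note that $\mathcal{C}$ is a full, isomorphism-closed subcategory of $\mathcal{A}$, which is immediate from its definition together with the fact that $\mathcal{W}$ is isomorphism-closed. Next, $i_{*}(\mathcal{A}')\subset\mathcal{C}$: for any $A'\in\mathcal{A}'$, condition (3) of Definition 2.4 gives ${\rm Im}\,i_{*}={\rm Ker}\,j^{*}$, hence $j^{*}i_{*}(A')=0\in\mathcal{W}$, so $i_{*}(A')\in\mathcal{C}$.

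It then remains to check the three closure conditions. For kernels and cokernels, let $f\colon M\to N$ be a morphism in $\mathcal{C}$; exactness of $j^{*}$ yields natural isomorphisms $j^{*}({\rm Ker}\,f)\cong{\rm Ker}\,j^{*}(f)$ and $j^{*}({\rm Coker}\,f)\cong{\rm Coker}\,j^{*}(f)$, and since $j^{*}(M),j^{*}(N)\in\mathcal{W}$ with $\mathcal{W}$ abelian, both ${\rm Ker}\,j^{*}(f)$ and ${\rm Coker}\,j^{*}(f)$ lie in $\mathcal{W}$; therefore ${\rm Ker}\,f,{\rm Coker}\,f\in\mathcal{C}$. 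As these are computed in $\mathcal{A}$ and $\mathcal{C}$ is closed under them, $\mathcal{C}$ inherits the abelian structure of $\mathcal{A}$. For extensions, apply the exact functor $j^{*}$ to a short exact sequence $0\to M\to N\to P\to 0$ in $\mathcal{A}$ with $M,P\in\mathcal{C}$, obtaining a short exact sequence $0\to j^{*}(M)\to j^{*}(N)\to j^{*}(P)\to 0$ in $\mathcal{A}''$; since $\mathcal{W}$ is closed under extensions, $j^{*}(N)\in\mathcal{W}$, i.e. $N\in\mathcal{C}$. This gives $\mathcal{C}\in\opname{wide}_{i_{*}(\mathcal{A}')}\mathcal{A}$. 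I do not expect a genuine obstacle here; the only point meriting a word of care is that the abelian structure on $\mathcal{C}$ is the one induced from $\mathcal{A}$, which is ensured by the closure under kernels and cokernels together with exactness of $j^{*}$, so that short exact sequences in $\mathcal{C}$ remain short exact after applying $j^{*}$.
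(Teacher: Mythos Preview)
Your proof is correct and follows essentially the same approach as the paper's own proof: both use exactness of $j^{*}$ (Remark 2.5(1)) to identify $j^{*}(\Ker f)\cong\Ker j^{*}(f)$ and $j^{*}(\opname{Coker} f)\cong\opname{Coker} j^{*}(f)$, then use closure of $\mathcal{W}$ under kernels, cokernels, and extensions, and finally note $j^{*}i_{*}=0$ to obtain $i_{*}(\mathcal{A}')\subset\mathcal{C}$. Your additional remarks on isomorphism-closure and the induced abelian structure are fine but not strictly needed.
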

\begin{proof}
If $f$ is a morphism of $\mathcal{C}$, then $j^{*}(f)$ is a morphism of $\mathcal{W}$ by the definition of $\mathcal{C}$. Since $\mathcal{W}$ is wide, by Remark 2.5(1) we have $j^{*}({\rm Ker}f)\cong {\rm Ker}j^{*}(f)\in\mathcal{W}$ and $j^{*}({\rm Coker}f)\cong{\rm Coker}j^{*}(f)\in\mathcal{W}$. Thus ${\rm Ker}f$ and ${\rm Coker}f$ are both in $\mathcal{C}$.

Let $0\rightarrow M \rightarrow N\rightarrow P\rightarrow 0$ be an exact sequence in $\mathcal{A}$ where $M, P\in\mathcal{C}$, we only have to prove that $N\in\mathcal{C}$. Since the functor $j^{*}$ is exact, we have the following exact sequence:
$$
0\longrightarrow j^{*}(M)\longrightarrow j^{*}(N)\longrightarrow j^{*}(P)\longrightarrow 0.
$$
Note that $j^{*}(M)\in\mathcal{W}$ and $j^{*}(P)\in\mathcal{W}$, it follows that $j^{*}(N)\in\mathcal{W}$ since $\mathcal{W}$ is wide. Thus $N\in\mathcal{C}$. We conclude that $\mathcal{C}$ is a wide subcategory of $\mathcal{A}$. It is obvious that $i_{*}(\mathcal{A}')\subset\mathcal{C}$ since $j^{*}i_{*}=0$.
\end{proof}
In conclusion, we have the following theorem.
\begin{theorem}
There is a bijection
$$\opname{wide}_{i_{*}(\mathcal{A}')}\mathcal{A}\leftrightarrow \opname{wide}\mathcal{A}''$$ given by $\opname{wide}_{i_{*}(\mathcal{A}')}\mathcal{A}\ni\mathcal{C}\mapsto j^{*}(\mathcal{C})\in\opname{wide}\mathcal{A}''$ and $\opname{wide}\mathcal{A}''\ni\mathcal{W}\mapsto \mathcal{C}=\{M\in\mathcal{A}|j^{*}(M)\in\mathcal{W}\}\in\opname{wide}_{i_{*}(\mathcal{A}')}\mathcal{A}$.
\end{theorem}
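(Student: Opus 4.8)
The plan is to show that the two assignments of Propositions 3.3 and 3.5 are mutually inverse. Write $\Phi\colon\mathcal{C}\mapsto j^{*}(\mathcal{C})$ and $\Psi\colon\mathcal{W}\mapsto\{M\in\mathcal{A}\mid j^{*}(M)\in\mathcal{W}\}$; Propositions 3.3 and 3.5 already show both maps are well defined, so all that remains is $\Phi\Psi=\mathrm{id}$ and $\Psi\Phi=\mathrm{id}$. Throughout I would use that wide subcategories, being abelian full subcategories, are closed under isomorphisms, so that membership statements such as ``$j^{*}(M)\in\mathcal{W}$'' are read up to isomorphism.

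First I would check $\Phi\Psi(\mathcal{W})=\mathcal{W}$. The inclusion $\Phi\Psi(\mathcal{W})\subseteq\mathcal{W}$ is immediate from the definition of $\Psi$. For the reverse inclusion, given $W\in\mathcal{W}$ I would take $M=j_{*}(W)\in\mathcal{A}$; by Remark 2.5(2) one has $j^{*}(M)=j^{*}j_{*}(W)\cong W\in\mathcal{W}$, so $M\in\Psi(\mathcal{W})$, whence $W\cong j^{*}(M)\in\Phi\Psi(\mathcal{W})$.

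Next I would establish $\Psi\Phi(\mathcal{C})=\mathcal{C}$. The inclusion $\mathcal{C}\subseteq\Psi\Phi(\mathcal{C})$ is trivial, and the reverse inclusion is the heart of the matter. Suppose $M\in\mathcal{A}$ satisfies $j^{*}(M)\cong j^{*}(C)$ for some $C\in\mathcal{C}$. I would apply Remark 2.5(3) to $M$, obtaining $M'\in\mathcal{A}'$ and an exact sequence $0\rightarrow i_{*}(M')\rightarrow j_{!}j^{*}(M)\rightarrow M\rightarrow i_{*}i^{*}(M)\rightarrow 0$. Here $j_{!}j^{*}(M)\cong j_{!}j^{*}(C)\in\mathcal{C}$ by Lemma 3.1, while $i_{*}(M')$ and $i_{*}i^{*}(M)$ lie in $i_{*}(\mathcal{A}')\subseteq\mathcal{C}$. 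Breaking the four-term sequence at the image $K$ of $j_{!}j^{*}(M)\rightarrow M$ into $0\rightarrow i_{*}(M')\rightarrow j_{!}j^{*}(M)\rightarrow K\rightarrow 0$ and $0\rightarrow K\rightarrow M\rightarrow i_{*}i^{*}(M)\rightarrow 0$, the first sequence exhibits $K$ as a cokernel of a morphism in the abelian subcategory $\mathcal{C}$, hence $K\in\mathcal{C}$; the second then exhibits $M$ as an extension of two objects of $\mathcal{C}$, hence $M\in\mathcal{C}$ since $\mathcal{C}$ is closed under extensions.

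The only delicate step is this last inclusion $\Psi\Phi(\mathcal{C})\subseteq\mathcal{C}$; the rest is formal manipulation of the adjunction isomorphisms recorded in Remark 2.5. The essential inputs are Lemma 3.1, which places $j_{!}j^{*}(M)$ inside $\mathcal{C}$, and the hypothesis $i_{*}(\mathcal{A}')\subseteq\mathcal{C}$, which places the two $i_{*}$-terms inside $\mathcal{C}$; once these are in hand, closure of $\mathcal{C}$ under cokernels and extensions finishes the argument.
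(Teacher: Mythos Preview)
Your argument is correct and follows essentially the same route as the paper: verify $\Phi\Psi=\mathrm{id}$ by lifting $W\in\mathcal{W}$ along $j_{*}$, and verify $\Psi\Phi=\mathrm{id}$ by applying the four-term exact sequence of Remark~2.5(3) together with Lemma~3.1 and the hypothesis $i_{*}(\mathcal{A}')\subseteq\mathcal{C}$. You are in fact more explicit than the paper at the key point, since you break the four-term sequence at the image $K$ and invoke closure under cokernels and then extensions, whereas the paper compresses this into a single appeal to Lemma~3.1. (A minor bookkeeping remark: the two preparatory results establishing well-definedness are numbered Propositions~3.2 and~3.3 in the paper, not 3.3 and 3.5.)
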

\begin{proof}
By Proposition 3.2 and Proposition 3.3, we only have to prove that the maps above are inverse to each other.
Let $\mathcal{W}\in\opname{wide}\mathcal{A}''$ and $\mathcal{C}=\{M\in\mathcal{A}|j^{*}(M)\in\mathcal{W}\}$. We claim that $j^{*}(\mathcal{C})=\mathcal{W}$. Clearly $j^{*}(\mathcal{C})\subset\mathcal{W}$. For any $N\in\mathcal{W}$, $N\cong j^{*}j_{*}(N)\in\mathcal{W}$, then $j_{*}(N)\in\mathcal{C}$ and therefore $N\in j^{*}(\mathcal{C})$.

Conversely, let $\mathcal{C}\in\opname{wide}_{i_{*}(\mathcal{A}')}\mathcal{A}$ and $\mathcal{C}'=\{M\in\mathcal{A}|j^{*}(M)\in j^{*}(\mathcal{C})\}\in\opname{wide}_{i_{*}(\mathcal{A}')}\mathcal{A}$. We claim that $\mathcal{C}=\mathcal{C}'$. Clearly $\mathcal{C}\subset\mathcal{C}'$. For any $M\in\mathcal{C}'\subset\mathcal{A}$, by Lemma 3.1(2) we have $M\in\mathcal{C}$. We have finished to prove that $\mathcal{C}=\mathcal{C}'$.
\end{proof}
\begin{theorem}
If $\mathcal{C}\subset\mathcal{A}$ is wide and satisfies $i_{*}i^{*}(\mathcal{C})\subset\mathcal{C}$(resp. $i_{*}i^{!}(\mathcal{C})\subset\mathcal{C})$), then $i^{*}(\mathcal{C})$ (resp. $i^{!}(\mathcal{C})$)$\subset\mathcal{A}'$ is wide.
\end{theorem}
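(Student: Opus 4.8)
The plan is to prove the statement for $i^{*}$; the case of $i^{!}$ is entirely analogous, since $(i_{*},i^{!})$ is an adjoint pair and $i^{!}i_{*}\cong\mathrm{id}$ by Remark 2.5(2). The point to watch is that, unlike $j^{*}$, the functor $i^{*}$ is only a left adjoint of $i_{*}$, hence merely right exact, so the Five Lemma argument of Proposition 3.2 does not transport verbatim --- in particular $i^{*}$ need not preserve the kernel of a map. The remedy I would use is that $i^{*}$ becomes \emph{exact} once restricted to the subcategory $\mathrm{Ker}\,j^{*}=\mathrm{Im}\,i_{*}$ of $\mathcal{A}$.

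First I would record the following. Since $j^{*}$ is exact (Remark 2.5(1)), $\mathcal{S}:=\mathrm{Ker}\,j^{*}$ is a Serre subcategory of $\mathcal{A}$; in particular $\mathcal{S}$ is wide, and kernels, cokernels and extensions formed inside $\mathcal{S}$ coincide with those formed in $\mathcal{A}$. By Definition 2.4(2)--(3) the functor $i_{*}\colon\mathcal{A}'\to\mathcal{S}$ is fully faithful and essentially surjective, hence an equivalence; combined with $i^{*}i_{*}\cong\mathrm{id}_{\mathcal{A}'}$ (Remark 2.5(2)) this shows that the restriction $i^{*}|_{\mathcal{S}}$ is a quasi-inverse of $i_{*}\colon\mathcal{A}'\to\mathcal{S}$, so it is an exact equivalence $\mathcal{S}\iso\mathcal{A}'$; in particular it carries wide subcategories of $\mathcal{S}$ to wide subcategories of $\mathcal{A}'$.

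Next I would show that $i^{*}(\mathcal{C})=i^{*}(\mathcal{C}\cap\mathcal{S})$. The inclusion $\supseteq$ is trivial. For $\subseteq$, let $M\in\mathcal{C}$; then $i_{*}i^{*}(M)$ lies in $\mathrm{Im}\,i_{*}=\mathcal{S}$ automatically and in $\mathcal{C}$ by the hypothesis $i_{*}i^{*}(\mathcal{C})\subset\mathcal{C}$, so $i_{*}i^{*}(M)\in\mathcal{C}\cap\mathcal{S}$ and hence $i^{*}(M)\cong i^{*}i_{*}i^{*}(M)\in i^{*}(\mathcal{C}\cap\mathcal{S})$. Now $\mathcal{C}\cap\mathcal{S}$ is an intersection of two wide subcategories of $\mathcal{A}$, hence wide in $\mathcal{A}$; being contained in $\mathcal{S}$, and kernels, cokernels and extensions in $\mathcal{S}$ agreeing with those in $\mathcal{A}$, it is also wide as a subcategory of $\mathcal{S}$. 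Applying the exact equivalence $i^{*}|_{\mathcal{S}}$ then gives that $i^{*}(\mathcal{C})=i^{*}(\mathcal{C}\cap\mathcal{S})$ is wide in $\mathcal{A}'$, as desired. For the $i^{!}$ statement the same argument applies: $i^{!}i_{*}\cong\mathrm{id}$ forces $i^{!}|_{\mathcal{S}}$ to be a quasi-inverse of $i_{*}\colon\mathcal{A}'\to\mathcal{S}$ as well, hence an exact equivalence, and $i^{!}(\mathcal{C})=i^{!}(\mathcal{C}\cap\mathcal{S})$ using $i_{*}i^{!}(\mathcal{C})\subset\mathcal{C}$.

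The only genuine obstacle is the one flagged above, namely the failure of $i^{*}$ to be exact on all of $\mathcal{A}$; the whole strategy is to push the computation into $\mathrm{Ker}\,j^{*}$, where $i^{*}$ is an equivalence, after which nothing but routine bookkeeping remains. If instead one prefers an argument in the concrete diagram-chasing style of Propositions 3.2 and 3.3, the same idea works directly: given a map $f'$ of $i^{*}(\mathcal{C})$, apply $i_{*}$ to obtain a map of $\mathcal{C}$ (its domain and codomain lie in $\mathcal{C}$ by hypothesis), form its kernel and cokernel in $\mathcal{C}$ --- each of which, being a subobject respectively a quotient of an object of $\mathrm{Ker}\,j^{*}$, again lies in $\mathrm{Ker}\,j^{*}$ --- and then apply $i^{*}$, using that on $\mathrm{Ker}\,j^{*}$ it carries these back to the kernel and cokernel of $f'$; closure under extensions is then checked exactly as in Proposition 3.3, via $N'\cong i^{*}i_{*}(N')$.
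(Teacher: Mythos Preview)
Your argument is correct, but it takes a more circuitous route than the paper's. You worry that $i^{*}$ is only right exact and therefore organise the proof around the exact equivalence $i^{*}|_{\mathcal{S}}\colon\mathcal{S}\to\mathcal{A}'$ where $\mathcal{S}=\mathrm{Ker}\,j^{*}$, reducing to the wideness of $\mathcal{C}\cap\mathcal{S}$ in $\mathcal{S}$. The paper sidesteps this concern entirely by working with $i_{*}$ instead of $i^{*}$: since $i_{*}$ is \emph{exact} (Remark~2.5(1)), for any map $f'\colon M'\to N'$ in $i^{*}(\mathcal{C})$ one has directly $i_{*}(\mathrm{Ker}\,f')=\mathrm{Ker}\,i_{*}(f')$ and $i_{*}(\mathrm{Coker}\,f')=\mathrm{Coker}\,i_{*}(f')$, and $i_{*}$ preserves short exact sequences. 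The hypothesis $i_{*}i^{*}(\mathcal{C})\subset\mathcal{C}$ then places $i_{*}(f')$ in $\mathcal{C}$, wideness of $\mathcal{C}$ puts $i_{*}(\mathrm{Ker}\,f')$, $i_{*}(\mathrm{Coker}\,f')$, and $i_{*}(N')$ (in the extension case) back in $\mathcal{C}$, and a final application of $i^{*}i_{*}\cong\mathrm{id}$ finishes. Your final paragraph essentially rediscovers this, but with the unnecessary intermediate observation that the kernel and cokernel land in $\mathrm{Ker}\,j^{*}$ --- this is automatic once you use that $i_{*}$ already computes them. What your approach buys is a clean conceptual picture (transport along an equivalence); what the paper's buys is brevity, since exactness of $i_{*}$ makes the three closure properties each a two-line check.
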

\begin{proof}
Assume that $\mathcal{C}\subset\mathcal{A}$ is wide and satisfies $i_{*}i^{*}(\mathcal{C})\subset\mathcal{C}$. We prove that $i^{*}(\mathcal{C})\subset\mathcal{A}'$ is wide. The proof that $i^{!}(\mathcal{C})\subset\mathcal{A}'$ is wide when $i_{*}i^{!}(\mathcal{C})\subset\mathcal{C}$ is similar.

Step 1: $i^{*}(\mathcal{C})$ is closed under kernels and cokernels. If $f': M'\rightarrow N'$ is a morphism of $i^{*}(\mathcal{C})$, we have to prove that ${\rm Ker}f',~{\rm Coker}f'\in i^{*}(\mathcal{C})$. Applying the functor $i_{*}$ to $f'$, by Remark 2.5(1) we have the following exact sequence:
$$
0\longrightarrow i_{*}({\rm Ker}f')\longrightarrow i_{*}(M')\buildrel {i_{*}(f')} \over\longrightarrow i_{*}(N')\longrightarrow i_{*}({\rm Coker}f')\longrightarrow 0.
$$
Note that $i_{*}i^{*}(\mathcal{C})\subset\mathcal{C}$, it follows that $i_{*}(f')$ is a morphism of $\mathcal{C}$. Since $\mathcal{C}$ is wide, it follows that $i_{*}({\rm Ker}f')$ and $i_{*}({\rm Coker}f')$ belong to $\mathcal{C}$. Thus ${\rm Ker}f'\cong i^{*}i_{*}({\rm Ker}f')\in i^{*}(\mathcal{C})$ and ${\rm Coker}f'\cong i^{*}i_{*}({\rm Coker}f')\in i^{*}(\mathcal{C})$.

Step 2: $i^{*}(\mathcal{C})$ is closed under extensions. Let $0\rightarrow M' \rightarrow N'\rightarrow P'\rightarrow 0$ be an exact sequence in $\mathcal{A}'$ where $M', P'\in i^{*}(\mathcal{C})$, we have to prove that $N'\in i^{*}(\mathcal{C})$. Applying the functor $i_{*}$ to the above exact sequence, by Remark 2.5(1) we have the following exact sequence:
$$
0\longrightarrow i_{*}(M')\longrightarrow i_{*}(N')\longrightarrow i_{*}(P')\longrightarrow 0.
$$
Note that $i_{*}i^{*}(\mathcal{C})\subset\mathcal{C}$, it follows that $i_{*}(M'), i_{*}(P')\in \mathcal{C}$. Since $\mathcal{C}$ is wide, $i_{*}(N')\in \mathcal{C}$. Thus $N'\cong i^{*}i_{*}(N')\in i^{*}(\mathcal{C})$.

We have finished to prove that $i^{*}({\mathcal{C}})$ is wide.
\end{proof}

The next result states that an adjoint pair of two categories can be restricted to an adjoint pair of subcategories under an additional condition.

\begin{proposition}
Let $(F, G)$ be an adjoint pair from $\mathcal{A}$ to $\mathcal{B}$ as in Definition 2.3. If $\mathcal{C}$ is a subcategory of $\mathcal{A}$ satisfying $GF(\mathcal{C})\subset\mathcal{C}$, then the restricted functors $(\overline{F}, \overline{G})(=$$(F|_{\mathcal{C}}, G|_{F(\mathcal{C})}))$ is an adjoint pair from $\mathcal{C}$ to $F(\mathcal{C})$.

Dually, if $\mathcal{D}$ is a subcategory of $\mathcal{B}$ satisfying $FG(\mathcal{D})\subset\mathcal{D}$, then the restricted functors $(\overline{F}, \overline{G})(=$$(F|_{G(\mathcal{D})}, G|_{\mathcal{D}}))$ is an adjoint pair from $G(\mathcal{D})$ to $\mathcal{D}$.
\end{proposition}
\begin{proof}
For any $C, D\in\mathcal{C}$, since $(F, G)$ is an adjoint pair from $\mathcal{A}$ to $\mathcal{B}$ and $GF(\mathcal{C})\subset\mathcal{C}$, we have the following bijection:
$${\rm Hom}_{F(\mathcal{C})}(\overline{F}C, FD)={\rm Hom}_{\mathcal{B}}(FC, FD)\cong{\rm Hom}_{\mathcal{A}}(C, GFD)={\rm Hom}_{\mathcal{C}}(C, \overline{G}FD),$$ which is functorial in $C$ and $D$. Therefore $(\overline{F}, \overline{G})$ is an adjoint pair  from to $\mathcal{C}$ to $F(\mathcal{C})$.
\end{proof}

\begin{lemma}
If $i_{*}(\mathcal{A}')\subset \mathcal{C}$, then $i^{*}(\mathcal{C})=i^{!}(\mathcal{C})=\mathcal{A}'$.
\end{lemma}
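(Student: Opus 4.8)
**Plan for proving $i^*(\mathcal{C}) = i^!(\mathcal{C}) = \mathcal{A}'$ when $i_*(\mathcal{A}') \subset \mathcal{C}$.**

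The strategy is to show both inclusions for each of $i^*(\mathcal{C})$ and $i^!(\mathcal{C})$. The inclusion $i^*(\mathcal{C}) \subset \mathcal{A}'$ is immediate since $i^*$ has target $\mathcal{A}'$; the real content is the reverse inclusion $\mathcal{A}' \subset i^*(\mathcal{C})$. First I would take an arbitrary object $M' \in \mathcal{A}'$. By Remark 2.5(2) we have $i^*i_*(M') \cong M'$. Since $i_*(\mathcal{A}') \subset \mathcal{C}$, the object $i_*(M')$ lies in $\mathcal{C}$, hence $M' \cong i^*i_*(M') \in i^*(\mathcal{C})$. This gives $\mathcal{A}' \subset i^*(\mathcal{C})$, and combined with the trivial inclusion yields $i^*(\mathcal{C}) = \mathcal{A}'$. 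The argument for $i^!(\mathcal{C}) = \mathcal{A}'$ is verbatim the same, using $i^!i_* \cong \mathrm{id}$ from Remark 2.5(2) in place of $i^*i_* \cong \mathrm{id}$: for any $M' \in \mathcal{A}'$, we have $M' \cong i^!i_*(M')$ with $i_*(M') \in \mathcal{C}$, so $M' \in i^!(\mathcal{C})$.

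I expect there to be essentially no obstacle here: the only subtlety is that $i^*(\mathcal{C})$ is a priori just the image of the class $\mathcal{C}$ under the functor $i^*$, i.e.\ the full subcategory of $\mathcal{A}'$ on objects of the form $i^*(C)$ for $C \in \mathcal{C}$ (up to isomorphism), and one must check that every object of $\mathcal{A}'$ is hit. The isomorphisms $i^*i_* \cong \mathrm{id}$ and $i^!i_* \cong \mathrm{id}$ guaranteed by the recollement (Remark 2.5(2)) provide exactly the required preimages, using the hypothesis $i_*(\mathcal{A}') \subset \mathcal{C}$ to ensure those preimages $i_*(M')$ genuinely lie in $\mathcal{C}$. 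No use of the wideness of $\mathcal{C}$, exactness of functors, or the Five Lemma is needed; this lemma is a purely formal consequence of the adjunction identities and is included as a bookkeeping step to identify the left-hand term $\mathcal{A}'$ in the forthcoming recollement of Theorem 3.8.
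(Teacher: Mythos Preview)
Your proposal is correct and is essentially identical to the paper's proof: both use the isomorphisms $i^{*}i_{*}\cong\mathrm{id}$ and $i^{!}i_{*}\cong\mathrm{id}$ from Remark~2.5(2) together with the hypothesis $i_{*}(\mathcal{A}')\subset\mathcal{C}$ to obtain $\mathcal{A}'\cong i^{*}i_{*}(\mathcal{A}')\subset i^{*}(\mathcal{C})$ (and likewise for $i^{!}$), from which equality follows since the reverse inclusion is trivial. The paper's proof is just a one-line compressed version of exactly what you wrote.
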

\begin{proof}
In fact, if $i_{*}(\mathcal{A}')\subset \mathcal{C}$, then $\mathcal{A}'\cong i^{*}i_{*}(\mathcal{A}')\subset i^{*}(\mathcal{C})$ and $\mathcal{A}'\cong i^{!}i_{*}(\mathcal{A}')\subset i^{!}(\mathcal{C})$. Thus $i^{*}(\mathcal{C})=i^{!}(\mathcal{C})=\mathcal{A}'$.
\end{proof}

Now we give the proof of Theorem 1.2.

\begin{theorem}
If $\mathcal{C}\subset\mathcal{A}$ is wide and satisfies $i_{*}(\mathcal{A}')\subset\mathcal{C}$, then we obtain a recollement of wide subcategories as follows:
$$\xymatrix@!C=2pc{
\mathcal{A}' \ar@{>->}[rr]|{\overline{i_{*}}} && \mathcal{C} \ar@<-4.0mm>@{->>}[ll]_{\overline{i^{*}}} \ar@{->>}[rr]|{\overline{j^{*}}} \ar@{->>}@<4.0mm>[ll]^{\overline{i^{!}}}&& j^{*}(\mathcal{C}) \ar@{>->}@<-4.0mm>[ll]_{\overline{j_{!}}} \ar@{>->}@<4.0mm>[ll]^{\overline{j_{*}}}
}.$$
\end{theorem}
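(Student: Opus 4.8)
The plan is to define the six restricted functors, check that each lands in the category asserted, and then verify the three conditions of Definition 2.4 one at a time, leaning on Proposition 3.6 for the adjunctions. First I would set up the functors. Since $i_{*}(\mathcal{A}')\subset\mathcal{C}$ by hypothesis, $i_{*}$ restricts to $\overline{i_{*}}:\mathcal{A}'\to\mathcal{C}$; as $i_{*}$ is fully faithful and $\mathcal{C}$ is full in $\mathcal{A}$, so is $\overline{i_{*}}$. Restricting the domains of $i^{*}$ and $i^{!}$ to $\mathcal{C}$ gives $\overline{i^{*}},\overline{i^{!}}:\mathcal{C}\to\mathcal{A}'$, which by Lemma 3.7 are moreover essentially surjective. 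The functor $j^{*}$ restricts by definition to $\overline{j^{*}}:\mathcal{C}\to j^{*}(\mathcal{C})$, and by Proposition 3.2 the target $j^{*}(\mathcal{C})$ is a wide, hence abelian, subcategory of $\mathcal{A}''$. Finally Lemma 3.1 gives $j_{!}j^{*}(\mathcal{C})\subset\mathcal{C}$ and $j_{*}j^{*}(\mathcal{C})\subset\mathcal{C}$, so $j_{!}$ and $j_{*}$ restrict to fully faithful functors $\overline{j_{!}},\overline{j_{*}}:j^{*}(\mathcal{C})\to\mathcal{C}$. Each restricted functor is additive (a restriction of an additive functor to full subcategories), and $\mathcal{A}'$, $\mathcal{C}$, $j^{*}(\mathcal{C})$ are all abelian.

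For condition (1), the four adjunctions, I would invoke Proposition 3.6 repeatedly. For $(\overline{i^{*}},\overline{i_{*}})$, apply it to the adjoint pair $(i^{*},i_{*})$ and the subcategory $\mathcal{C}\subset\mathcal{A}$: the hypothesis $i_{*}i^{*}(\mathcal{C})\subset\mathcal{C}$ holds since $i^{*}(\mathcal{C})\subset\mathcal{A}'$ and $i_{*}(\mathcal{A}')\subset\mathcal{C}$, and $i^{*}(\mathcal{C})=\mathcal{A}'$ by Lemma 3.7, so we obtain an adjoint pair from $\mathcal{C}$ to $\mathcal{A}'$. For $(\overline{j^{*}},\overline{j_{*}})$, apply it to $(j^{*},j_{*})$ and $\mathcal{C}\subset\mathcal{A}$, the required $j_{*}j^{*}(\mathcal{C})\subset\mathcal{C}$ being Lemma 3.1; this gives directly an adjoint pair from $\mathcal{C}$ to $j^{*}(\mathcal{C})$. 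The remaining pairs $(\overline{i_{*}},\overline{i^{!}})$ and $(\overline{j_{!}},\overline{j^{*}})$ require the evident two-sided refinement of Proposition 3.6: if $(F,G)$ is an adjoint pair from $\mathcal{X}$ to $\mathcal{Y}$ and $\mathcal{U}\subset\mathcal{X}$, $\mathcal{V}\subset\mathcal{Y}$ are full subcategories with $F(\mathcal{U})\subset\mathcal{V}$ and $G(\mathcal{V})\subset\mathcal{U}$, then $(F|_{\mathcal{U}},G|_{\mathcal{V}})$ is an adjoint pair from $\mathcal{U}$ to $\mathcal{V}$, since the functorial Hom-bijection is inherited verbatim from fullness. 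Applying this to $(i_{*},i^{!})$ with $\mathcal{U}=\mathcal{A}'$, $\mathcal{V}=\mathcal{C}$ (using $i_{*}(\mathcal{A}')\subset\mathcal{C}$ and $i^{!}(\mathcal{C})\subset\mathcal{A}'$), and to $(j_{!},j^{*})$ with $\mathcal{U}=j^{*}(\mathcal{C})$, $\mathcal{V}=\mathcal{C}$ (using $j_{!}j^{*}(\mathcal{C})\subset\mathcal{C}$ from Lemma 3.1), yields the other two adjoint pairs. Condition (2) was already observed in the first paragraph.

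For condition (3), note that the essential image of $\overline{i_{*}}$ is the isomorphism-closure of $i_{*}(\mathcal{A}')$, which lies inside $\mathcal{C}$, hence equals ${\rm Im}\,i_{*}$; while ${\rm Ker}\,\overline{j^{*}}=\{C\in\mathcal{C}\mid j^{*}(C)=0\}=\mathcal{C}\cap{\rm Ker}\,j^{*}=\mathcal{C}\cap{\rm Im}\,i_{*}={\rm Im}\,i_{*}$, where the third equality is the identity ${\rm Im}\,i_{*}={\rm Ker}\,j^{*}$ of the original recollement and the last uses ${\rm Im}\,i_{*}\subset\mathcal{C}$. Thus ${\rm Im}\,\overline{i_{*}}={\rm Ker}\,\overline{j^{*}}$, and all three conditions hold.

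I do not expect a serious obstacle; the proof is essentially bookkeeping assembled from Lemma 3.1, Proposition 3.2, Proposition 3.6 and Lemma 3.7. The one point needing genuine care — and where the hypothesis $i_{*}(\mathcal{A}')\subset\mathcal{C}$ is used at full strength rather than the weaker $i_{*}i^{*}(\mathcal{C})\subset\mathcal{C}$ of Theorem 3.5 — is the pair $(\overline{i_{*}},\overline{i^{!}})$ together with condition (3): without $i_{*}(\mathcal{A}')\subset\mathcal{C}$ the functor $\overline{i_{*}}$ would not even be defined with target $\mathcal{C}$, and $\mathcal{C}\cap{\rm Im}\,i_{*}$ would not simplify to ${\rm Im}\,i_{*}$. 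A minor nuisance is that Proposition 3.6 as stated produces an adjunction onto the literal image $F(\mathcal{C})$, so for two of the four pairs one must pass through the full-subcategory refinement noted above, which I would record as a one-line lemma or simply inline.
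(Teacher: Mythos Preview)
Your proposal is correct and follows essentially the same route as the paper's proof, invoking Lemma 3.1, Proposition 3.2, Proposition 3.6, and Lemma 3.7 at the same places and checking the three conditions of Definition 2.4 in turn. Your remark that Proposition 3.6 as literally stated only yields the pairs $(\overline{i^{*}},\overline{i_{*}})$ and $(\overline{j^{*}},\overline{j_{*}})$, while $(\overline{i_{*}},\overline{i^{!}})$ and $(\overline{j_{!}},\overline{j^{*}})$ require the obvious two-sided refinement, is in fact more careful than the paper, which simply cites Proposition 3.6 and Lemma 3.7 for all four adjunctions without further comment.
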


\begin{proof} Of course, $\ca'$ is wide in $\ca'$.
By Proposition 3.2, $j^{*}(\mathcal{C})$ is wide  in $\ca''$ since $\mathcal{C}$ is wide satisfying $i_{*}(\mathcal{A}')\subset\mathcal{C}$.

Step 1: By Lemma 3.1(1), Propsition 3.6 and Lemma 3.7, we know that ($\overline{i^{*}}, \overline{i_{*}}$), ($\overline{i_{*}}, \overline{i^{!}}$), ($\overline{j_{!}}, \overline{j^{*}}$) and ($\overline{j^{*}}, \overline{j_{*}}$) are adjoint pairs.

Step 2: By Definition 2.4(2), we know that $i_{*}, j_{!}$ and $j_{*}$ are fully faithful. So the restriction functors $\overline{i_{*}}, \overline{j_{!}}$ and $\overline{j_{*}}$ are also fully faithful.

Step 3: ${\rm Im}\overline{i_{*}}= {\rm Ker}\overline{j^{*}}$ is obvious.

By the definition of recollements, we have done.
\end{proof}

Here we give an easy example to illustrate the result.
\begin{example}
Let $\Lambda$ be the path algebra over a field $K$ of the quiver $1\leftarrow 2\rightarrow 3$, of type $A_{3}$. If $e$ is the idempotent $e_{2}+e_{3}$, then as a right $\Lambda$-module $\Lambda/\Lambda e\Lambda$ is isomorphic to the simple module $S_{1}$. In this case, there is a recollement as follows:
$$\xymatrix@!C=2pc{
{\rm mod\,}(\Lambda/\Lambda e\Lambda)\ar@{>->}[rr]|{i_{*}} && {\rm mod\,}\Lambda \ar@<-4.0mm>@{->>}[ll]_{i^{*}} \ar@{->>}[rr]|{j^{*}} \ar@{->>}@<4.0mm>[ll]^{i^{!}}&& {\rm mod\,}(e\Lambda e)\ar@{>->}@<-4.0mm>[ll]_{j_{!}} \ar@{>->}@<4.0mm>[ll]^{j_{*}}
,}$$
where $i_{*}=-\otimes_{\Lambda/\Lambda e\Lambda}\Lambda/\Lambda e \Lambda$, $i^{*}=-\otimes_{\Lambda}\Lambda/\Lambda e \Lambda$, $i^{!}={\rm Hom}_{\Lambda}(\Lambda/\Lambda e\Lambda, -)$, $j_{!}=-\otimes_{e\Lambda e}e \Lambda$, $j_{*}={\rm Hom}_{e\Lambda e}(\Lambda e, -)$, $j^{*}=(-)e$.

The Auslander-Reiten quiver of ${\rm mod\,}\Lambda$ is the following:
$$\xymatrix@R=1pc{
&1\ar[dr]&&\scriptsize{\begin{matrix} 2\\3\\ \end{matrix}}\ar[dr]&&\\
&& \scriptsize{\begin{matrix} 2\\13\\ \end{matrix}}\ar[ur]\ar[dr]&&2&\\
&3\ar[ur]&& \scriptsize{\begin{matrix} 2\\1\\ \end{matrix}}\ar[ur]&&\\
}$$

\begin{table}[htbp]
  \caption{\label{tab:test}}
 \begin{tabular}{lcl}
  \toprule
 {\rm mod\,}($\Lambda/\Lambda e\Lambda$)  & $\mathcal{C}$ \quad\quad\quad & $j^{*}(\mathcal{C})$ \\
  \midrule
  \begin{xy}
     (0,0)="0"*{{\color{red} \bullet}}, +(4,0)*{\circ},
     "0"+(2,-3.5)*{\circ}, +(4,0)*{\circ},
     "0"+(0,-7)*{\circ}, +(4,0)*{\circ}, +(0,-2.3)*{}
  \end{xy} &
  \begin{xy}
     (0,0)="0"*{{\color{red} \bullet}}, +(4,0)*{\color{red} \bullet},
     "0"+(2,-3.5)*{\color{red} \bullet}, +(4,0)*{\color{red} \bullet},
     "0"+(0,-7)*{\color{red} \bullet}, +(4,0)*{\color{red} \bullet}, +(0,-2.3)*{}
  \end{xy} \quad\quad\quad &
  \begin{xy}
     (0,0)="0",
     "0"+(2,-3.5)*{\color{red} \bullet},
     "0"+(0,-7)*{\color{red} \bullet}, +(4,0)*{\color{red} \bullet}, +(0,-2.3)*{}
  \end{xy} \\
  \begin{xy}
     (0,0)="0"*{{\color{red} \bullet}}, +(4,0)*{\circ},
     "0"+(2,-3.5)*{\circ}, +(4,0)*{\circ},
     "0"+(0,-7)*{\circ}, +(4,0)*{\circ}, +(0,-2.3)*{}
  \end{xy} &
  \begin{xy}
     (0,0)="0"*{{\color{red} \bullet}}, +(4,0)*{\color{red} \bullet},
     "0"+(2,-3.5)*{\color{red} \bullet}, +(4,0)*{\circ},
     "0"+(0,-7)*{\circ}, +(4,0)*{\circ}, +(0,-2.3)*{}
  \end{xy}\quad\quad\quad &
  \begin{xy}
     (0,0)="0",
     "0"+(2,-3.5)*{\color{red} \bullet},
     "0"+(0,-7)*{\circ}, +(4,0)*{\circ}, +(0,-2.3)*{}
  \end{xy} \\
  \begin{xy}
     (0,0)="0"*{{\color{red} \bullet}}, +(4,0)*{\circ},
     "0"+(2,-3.5)*{\circ}, +(4,0)*{\circ},
     "0"+(0,-7)*{\circ}, +(4,0)*{\circ}, +(0,-2.3)*{}
  \end{xy} &
  \begin{xy}
     (0,0)="0"*{{\color{red} \bullet}}, +(4,0)*{\circ},
     "0"+(2,-3.5)*{\circ}, +(4,0)*{\circ},
     "0"+(0,-7)*{\color{red} \bullet}, +(4,0)*{\circ}, +(0,-2.3)*{}
  \end{xy}\quad\quad\quad &
  \begin{xy}
     (0,0)="0",
     "0"+(2,-3.5)*{\circ},
     "0"+(0,-7)*{\color{red} \bullet}, +(4,0)*{\circ}, +(0,-2.3)*{}
  \end{xy} \\
  \begin{xy}
     (0,0)="0"*{{\color{red} \bullet}}, +(4,0)*{\circ},
     "0"+(2,-3.5)*{\circ}, +(4,0)*{\circ},
     "0"+(0,-7)*{\circ}, +(4,0)*{\circ}, +(0,-2.3)*{}
  \end{xy} &
  \begin{xy}
     (0,0)="0"*{{\color{red} \bullet}}, +(4,0)*{\circ},
     "0"+(2,-3.5)*{\circ}, +(4,0)*{\color{red} \bullet},
     "0"+(0,-7)*{\circ}, +(4,0)*{\color{red} \bullet}, +(0,-2.3)*{}
  \end{xy}\quad\quad\quad &
  \begin{xy}
     (0,0)="0",
     "0"+(2,-3.5)*{\circ},
     "0"+(0,-7)*{\circ}, +(4,0)*{\color{red} \bullet}, +(0,-2.3)*{}
  \end{xy} \\
  \begin{xy}
     (0,0)="0"*{{\color{red} \bullet}}, +(4,0)*{\circ},
     "0"+(2,-3.5)*{\circ}, +(4,0)*{\circ},
     "0"+(0,-7)*{\circ}, +(4,0)*{\circ}, +(0,-2.3)*{}
  \end{xy} &
  \begin{xy}
     (0,0)="0"*{{\color{red} \bullet}}, +(4,0)*{\circ},
     "0"+(2,-3.5)*{\circ}, +(4,0)*{\circ},
     "0"+(0,-7)*{\circ}, +(4,0)*{\circ}, +(0,-2.3)*{}
  \end{xy}\quad\quad\quad &
  \begin{xy}
     (0,0)="0",
     "0"+(2,-3.5)*{\circ},
     "0"+(0,-7)*{\circ}, +(4,0)*{\circ}, +(0,-2.3)*{}
  \end{xy} \\
  \bottomrule
 \end{tabular}
\end{table}

In Table 1, we give a complete list of wide subcategories $\mathcal{C}$ of $\mathcal{A}$ which satisfies $i_{*}({\rm mod\,}(\Lambda/\Lambda e\Lambda))\subset\mathcal{C}$, also each of the corresponding wide subcategories $j^{*}(\mathcal{C})$ in ${\rm mod\,}(e\Lambda e)$ is listed in the same row. The subcategories of module categories are indicated by specifying a subset of the indecomposable modules in the Auslander-Reiten quiver.

\end{example}

\vspace{0.2cm}
\bigskip
{\bf Acknowledgement.} The author would like to thank Dong Yang for the useful comments. This work was supported by Huzhou University's scientific research project(Grant No. 2021XJKJ09).

\end{document}